\date{}
\renewcommand{\uppercasenonmath}[1]{}
\theoremstyle{plain}
\newtheorem{theorem}{Theorem}[section]
\newtheorem{proposition}[theorem]{Proposition}
\newtheorem{corollary}[theorem]{Corollary}
\theoremstyle{definition}
\newtheorem{definition}[theorem]{Definition}
\theoremstyle{definition}
\theoremstyle{remark}
\newtheorem{remark}[theorem]{Remark}
\newcommand{\pf}{\noindent\begin {proof}}
\newcommand{\epf}{\end{proof}}
\newcommand{\Ker}{\mbox{\rm Ker}}
\newcommand{\Ext}{\mbox{\rm Ext}}
\newcommand{\Hom}{\mbox{\rm Hom}}
\newcommand{\Tor}{\mbox{\rm Tor}}
\newcommand{\E}{{\rm E}}
\def\Hom{{\rm Hom}}
\def\Ext{{\rm Ext}}
\def\Tor{{\rm Tor}}
\def\pd{{\rm pd}}
\def\Ker{{\rm Ker}}
\def\Im{{\rm Im}}
\def\Mod{{\rm Mod}}
\begin{document}
\begin{center}
{\large  \bf A note on Matlis localizations}

\vspace{0.5cm}  Xiaolei Zhang$^{a}$
%\bigskip

{\footnotesize a. School of Mathematics and Statistics, Shandong University of Technology, Zibo 255000, China

E-mail: zxlrghj@163.com\\}
\end{center}
%\begin{figure}[b]
%\rule[-2.5truemm]{5cm}{0.1truemm}\\[2mm]
%{\small }
%\end{figure}

%\begin{figure}[b]
%\rule[-2.5truemm]{5cm}{0.1truemm}\\[2mm]
%{\small }
%\end{figure}
\bigskip
\centerline { \bf  Abstract}
\bigskip
\leftskip10truemm \rightskip10truemm \noindent

Let $R$ be a commutative ring and $S$ a  multiplicative subset of $R$. A ring $R$ is called  an $S$-Matlis ring if $\pd_RR_S\leq 1$.
In this note, we give some new  characterizations of $S$-Matlis rings in terms of $S$-strongly flat modules, $S$-weakly cotorsion modules and $S$-$h$-divisible modules. 
\vbox to 0.3cm{}\\
{\it Key Words:} $S$-Matlis ring; $S$-strongly flat module; $S$-weakly cotorsion module; $S$-$h$-divisible module.\\
{\it 2020 Mathematics Subject Classification:} Primary: 13C11; Secondary: 13C13.

\leftskip0truemm \rightskip0truemm
\bigskip
%\section { \bf Introduction    }
%\bigskip

\section{Introduction}

Throughout this paper, all rings are commutative with $1$. Let $R$ be a ring and $S$ be a multiplicative subset of $R,$ that is, $1\in S$ and $s_1s_2\in S$ for any $s_1\in S, s_2\in S.$ We always denote by $R_S$ the localization of $R$ at $S$.  A multiplicative set is called regular if it consists of non-zero-divisors. Note that if a multiplicative set $S$ is regular, then $R$ can be viewed as a subring of $R_S$ naturally.

In 2001, Fuchs and Salce in their famous monograph \cite{FS01} pointed out that, the projective dimension of the quotient field  of a domain $\leq 1$ has a tremendous influence on the entire module category.  A domain whose quotient field has projective dimension $\leq 1$ is said to be a Matlis domain, in honor of Eben Matlis, who first recognize the powerful consequences of this assumption (see \cite{M64}). Several characterizations of Matlis domains are given in \cite{BS02,FS01,M64} in terms of divisible modules, $h$-divisible modules, $h$-reduced modules, strongly flat modules and weakly cotorsion modules etc.

In 2017, Fuchs and Lee \cite{FS17} extended the notions of Matlis domains and some related modules to commutative rings with zero-divisors.  Let $R$ be a ring,  $R^{\times}$ the set of all non-zero-divisors in $R$ and $Q=R_{R^{\times}}$ its total ring of quotients.  A ring $R$ is called a Matlis ring if  $\pd_RQ\leq 1.$ An $R$-module $M$ is called  divisible if $sM=M$ for any $s\in R^{\times}$; and is called  $h$-divisible (resp., $h$-reduced) if the natural homomorphism $\Hom_R(Q,M)\rightarrow M$ is an epimorphism (resp., a zero morphism). Obviously, $h$-divisible modules are divisible. It was proved in \cite[Theorem 6.4]{FS17} that a ring $R$ is a Matlis ring if and only if $Q/R$ is a direct sum of countably presented submodules, if and only if every  divisible $R$-module is $h$-divisible. 

Recall that an $R$-module $M$ is called weakly cotorsion, denoted by $M\in \mathcal{WC}$, if $\Ext_R^1(Q,M)=0$; and is called strongly flat, denoted by $M\in \mathcal{SF}$, if $\Ext_R^1(M,N)=0$ for every $N\in \mathcal{WC}$. For the domain case, the strongly flat modules behave over Matlis domains nicely. One reason is that every strongly flat module over Matlis domains has projective dimension $\leq 1$; another reason is that  strongly flat modules over Matlis domains form a resolving class. Furthermore, these properties actually characterize Matlis domains (see \cite{M64,FST04}). The main motivation is to extend these two characterizations to commutative rings, or more generally commutative rings with regular multiplicative subsets (see \cite{aht05,BP19,WL11}). Actually, we obtain the following main result: 
\begin{theorem}$(=$ Theorem \ref{main}$)$ Let $R$ be a ring and $S$ a regular multiplicative subset of $R$. Then the following statements are equivalent:
	\begin{enumerate}
		\item $R$ is an $S$-Matlis ring;
		\item the class of $S$-strongly flat $R$-modules is resolving;
		\item the class of $S$-weakly cotorsion $R$-module is coresolving;
		\item the  cotorsion pair $(S\mbox{-}\mathcal{SF},S\mbox{-}\mathcal{WC})$ is hereditary;
		%\item $\Ext_R^1(Q,E)$ is $h$-reduced for any reg-injective $R$-module $E$;
		\item $\Ext_R^1(R_S/R,M)$ is $S$-$h$-reduced for any $($$S$-divisible$)$  $R$-module $M$;
		%\item every $S$-torsion submodule of $S$-divisible  $R$-module is a direct summand;
		%\item every $S$-divisible $R$-module has an $S$-strongly flat cover;
		\item every $S$-strongly flat $R$-module has projective dimension at most $1$;
		\item every $S$-$h$-divisible $R$-module is $S$-weakly cotorsion.
		%\item the class of $S$-$h$-divisible $R$-modules is closed under extensions.
	\end{enumerate}
\end{theorem}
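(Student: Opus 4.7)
The plan is to take condition $(1)$ as the central one and prove it equivalent to each of the other six. Three observations recur throughout: $R_S\in S\mbox{-}\mathcal{SF}$ by the very definition of the cotorsion pair; $\Ext^i_R(R_S,-)$ takes values in the category of $R_S$-modules; and for any $R$-module $M$ embedded in an injective $E$, the cokernel $E/M$ is automatically $S$-$h$-divisible. The last point follows because the surjection $\Hom_R(R_S,E)\twoheadrightarrow E$ (from injectivity of $E$) post-composed with $E\twoheadrightarrow E/M$ factors through $\Hom_R(R_S,E/M)\to E/M$, forcing the latter to be surjective.

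The equivalences $(2)\Leftrightarrow(3)\Leftrightarrow(4)$ follow from standard cotorsion-pair theory via the dimension-shifts $\Ext^2(A,B)\cong\Ext^1(\Omega A,B)$ and dually in the second argument. The link to $(1)$ goes through $(6)$: $(6)\Rightarrow(1)$ is immediate since $R_S\in S\mbox{-}\mathcal{SF}$, while $(1)\Rightarrow(6)$ invokes the structural theorem that every $S$-strongly flat module is a direct summand of an extension of a free $R_S$-module by a free $R$-module, whence $\pd\le 1$ under $(1)$. Finally $(6)\Rightarrow(2)$ is the standard argument that if $C\in S\mbox{-}\mathcal{SF}$ has $\pd C\le 1$ then $\Ext^2(C,N)=0$ for $N\in S\mbox{-}\mathcal{WC}$, so resolvingness follows from the long exact $\Ext$-sequence.

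Next, $(7)$ is incorporated via the exact sequence
\[
0\to\Hom_R(R_S/R,M)\to\Hom_R(R_S,M)\to M\to 0,
\]
valid for any $S$-$h$-divisible $M$. The middle term is an $R_S$-module and hence lies in $S\mbox{-}\mathcal{WC}$; the left term also lies in $S\mbox{-}\mathcal{WC}$ by the Grothendieck spectral sequence $\Ext^p_R(R_S,\Ext^q_R(R_S/R,M))\Rightarrow\Ext^{p+q}_R(R_S\otimes_R R_S/R,M)=0$ (using flatness of $R_S$ and $R_S\otimes_R R_S/R=0$) read off at $(p,q)=(1,0)$. Hence $(3)$ (coresolvingness) and $(1)$ (via $\Ext^2_R(R_S,-)=0$) each yield $(7)$. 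The converse $(7)\Rightarrow(1)$ uses the third observation: $E/M$ is $S$-$h$-divisible, so $(7)$ places $E/M$ in $S\mbox{-}\mathcal{WC}$, giving $\Ext^2_R(R_S,M)=\Ext^1_R(R_S,E/M)=0$.

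Finally, $(1)\Leftrightarrow(5)$ rests on the identification
\[
\Ext^2_R(R_S,M)\;\cong\;\Ext^2_R(R_S/R,M)\;\cong\;\Ext^1_R(R_S/R,E/M)
\]
extracted from $0\to R\to R_S\to R_S/R\to 0$ and an injective resolution of $M$. Under $(1)$, $\Ext^2_R(R_S,-)=0$ together with the same spectral sequence at $(p,q)=(0,1)$ forces $\Hom_R(R_S,\Ext^1_R(R_S/R,M))=0$, i.e.\ $(5)$. Conversely under $(5)$, the group $\Ext^2_R(R_S,M)$ is intrinsically an $R_S$-module and $R$-linearly isomorphic to the $S$-$h$-reduced group $\Ext^1_R(R_S/R,E/M)$; but any nonzero $R_S$-module $N$ admits the nonzero $R$-linear map $r\mapsto rn$ from $R_S$ for a chosen nonzero $n$, and so cannot be $S$-$h$-reduced, forcing $\Ext^2_R(R_S,M)=0$. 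The main obstacle I anticipate is the structure theorem for $S$-strongly flat modules invoked in $(1)\Rightarrow(6)$, which must be adapted from the Matlis-domain case to the context of a regular multiplicative subset of a commutative ring with zero-divisors; a secondary subtlety is that the identification above is only $R$-linear, which is however enough since $S$-$h$-reducedness is an $R$-linear property.
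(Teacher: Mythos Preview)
Your proof is correct and, in places, more streamlined than the paper's. The equivalences $(2)\Leftrightarrow(3)\Leftrightarrow(4)$ and $(1)\Leftrightarrow(6)$ are handled essentially the same way (the structure theorem you worry about is exactly \cite[Lemma~1.2]{BP19}, valid for any multiplicative subset, so no adaptation is needed). The genuine divergence is in $(5)$ and $(7)$. For $(1)\Leftrightarrow(7)$ the paper simply invokes the prior characterization Theorem~\ref{SM-ref} from \cite{aht05}, whereas you give a self-contained argument via the short exact sequence $0\to\Hom_R(K,M)\to\Hom_R(R_S,M)\to M\to 0$ together with the fact that ordinary injectives, hence their quotients, are $S$-$h$-divisible when $S$ is regular. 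For $(5)$ the paper runs $(3)\Rightarrow(5)\Rightarrow(1)$ through the $S$-injective envelope machinery of \cite{WL11} (Theorem~\ref{gv-inj-c}, Proposition~\ref{rinj-hd}) and an explicit splitting argument; you instead prove $(1)\Leftrightarrow(5)$ directly, using the Grothendieck spectral sequence $\Ext^p_R(R_S,\Ext^q_R(K,M))\Rightarrow\Ext^{p+q}_R(R_S\otimes_RK,M)=0$ for $(1)\Rightarrow(5)$, and for $(5)\Rightarrow(1)$ the clean observation that $\Ext^2_R(R_S,M)\cong\Ext^1_R(K,E/M)$ is at once an $R_S$-module and $S$-$h$-reduced, hence zero. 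Your route thus bypasses the $S$-injective envelope theory entirely at the price of a spectral sequence (which is legitimate here since $R_S$ is flat, so $\Hom_R(K,I)$ is $\Hom_R(R_S,-)$-acyclic for injective $I$); the paper's route is longer but more elementary in its tools and showcases the $S$-injective envelope as a device of independent interest.
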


\section{Preliminaries}

In this section we recall some notions and investigate some new properties to prove the main result of this note.
\begin{definition}\cite[Definition 1.6.10]{fk16}
Let $R$ be a ring and $S$ a  multiplicative subset of $R$. 
An $R$-module $M$ is said to be
\begin{enumerate}
	\item $S$-divisible if $sM=M$ for any $s\in S$;
	\item $S$-torsion-free if $sm=0$ with $s\in S$ and $m\in M$ whence $m=0$;
	\item $S$-torsion if for any $m\in M$, there is $s\in S$ such that $sm=0$. 
\end{enumerate}
\end{definition}
It is well-known that if a multiplicative subset $S$ of $R$  is  regular, then an $R$-module $M$ is $S$-divisible (resp., $S$-torsion-free) if and only if $\Ext_R^1(R/Rs,M)=0$ (resp., $\Tor^R_1(R/Rs,M)=0$).

 \begin{proposition}\label{w-gv-inj}
	Let $R$ be a ring and $S$ a regular multiplicative subset of $R$. Then an $R$-module $E$ is an $S$-divisible $S$-torsion-free $R$-module if and only if $E$ is an $R_S$-module.
\end{proposition}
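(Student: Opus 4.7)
The plan is to reduce both directions to the single observation that an $R$-module $E$ admits an $R_S$-module structure extending its $R$-module structure if and only if multiplication by every $s\in S$ is a bijection on $E$. Since $S$ is regular, such an $R_S$-structure, when it exists, is unique and compatible with the canonical inclusion $R\hookrightarrow R_S$.

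For the ``if'' direction, assume $E$ is an $R_S$-module. Then for every $s\in S$ the element $s$ is a unit in $R_S$, so the endomorphism $\mu_s:E\to E$ given by $m\mapsto sm$ has the inverse $\mu_{1/s}$. Surjectivity of $\mu_s$ gives $sE=E$, so $E$ is $S$-divisible; injectivity of $\mu_s$ gives that $sm=0$ forces $m=0$, so $E$ is $S$-torsion-free.

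For the ``only if'' direction, assume $E$ is $S$-divisible and $S$-torsion-free. Then for each $s\in S$, $S$-divisibility says $\mu_s$ is surjective and $S$-torsion-freeness says $\mu_s$ is injective, so $\mu_s$ is an automorphism of $E$. For $r/s\in R_S$ and $m\in E$, define
\[
(r/s)\cdot m \;=\; \mu_s^{-1}(rm),
\]
i.e.\ the unique $n\in E$ with $sn=rm$. I would verify the following in order: (a) well-definedness, namely if $r/s=r'/s'$ in $R_S$ then $\mu_s^{-1}(rm)=\mu_{s'}^{-1}(r'm)$, which follows by applying $\mu_{ss'}$ to both sides and using that $S$ is regular (so the equation $s'r=sr'$ in $R$ transfers without any auxiliary multiplier); (b) additivity and compatibility with the $R_S$-addition; (c) the module axiom $((r_1/s_1)(r_2/s_2))\cdot m=(r_1/s_1)\cdot((r_2/s_2)\cdot m)$, which reduces to the commutativity of the $\mu_s$'s and the equality $r_1r_2 m=\mu_{s_1s_2}\mu_{s_1s_2}^{-1}(r_1r_2m)$; and (d) that $1\cdot m=m$ and that the action of $r/1$ agrees with the original action of $r\in R$, ensuring the extension property.

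The only mildly delicate step is (a): one must check that the inverses $\mu_s^{-1}$ really make $s'r=sr'$ translate into $\mu_s^{-1}(rm)=\mu_{s'}^{-1}(r'm)$ without needing an extra cancellation from $S$. This is exactly where regularity of $S$ is used, since it ensures that the equivalence relation defining $R_S$ is simply $s'r=sr'$ (no auxiliary element of $S$ needed), so after applying $\mu_{ss'}$ we get $s'rm=sr'm$, which is the required identity. Everything else is a routine verification, and the resulting $R_S$-structure is clearly unique because its restriction to $R$ is the given one.
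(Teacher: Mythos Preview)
Your proof is correct and follows essentially the same route as the paper: both directions reduce to the observation that $E$ carries a compatible $R_S$-structure precisely when each multiplication map $\mu_s$ is bijective. The only cosmetic differences are that the paper verifies $S$-divisibility in the ``if'' direction via the $\Ext^1_R(R/Rs,E)=0$ criterion (extending a map $Rs\to E$ to $R$) rather than citing invertibility of $s$ in $R_S$ directly, and that the paper simply asserts ``consequently $E$ is an $R_S$-module'' once $\mu_s$ is shown bijective, whereas you spell out the action $(r/s)\cdot m=\mu_s^{-1}(rm)$ and its well-definedness. One small remark: your well-definedness check does not actually require regularity of $S$, since even if $r/s=r'/s'$ only gives $t(s'r-sr')=0$ for some $t\in S$, injectivity of $\mu_t$ on $E$ (from $S$-torsion-freeness) already cancels the $t$; so regularity is convenient but not the essential ingredient at that step.
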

\begin{proof} Suppose $E$ is an $R_S$-module. Assume that $sm=0$ with $s\in S$ and $m\in M$. Then $\frac{s}{1}m=0$ and so $m=\frac{1}{s}(\frac{s}{1}m)=0$, and hence $M$ is $S$-torsion-free. Let $s\in S$  and $f:Rs\rightarrow E$ be an $R$-homomorphism. Write $g:R\rightarrow E$ to be an $R$-homomorphism satisfying $g(a)=\frac{a}{s}f(s)$ for any $a\in R$. Then for any $b\in I$, we have $f(b)=\frac{s}{s}f(b)=\frac{1}{s}f(sb)=\frac{b}{s}f(s)=g(b)$. So $g$ is a lift of $f$, and hence $\Ext_R^1(R/Rs,M)=0$, that is, $E$ is $S$-divisible.
	
On the other hand, we only need to show: for any $s\in R$, the multiplication $m_s:E\xrightarrow{\times s} E$ is an isomorphism. Let $s\in S$. Since $E$ is $S$-torsion-free, so $se=0\in E$ implies $e=0$. Hence $m_s$ is a monomorphism. Since $E$ is $S$-divisible, we have $E/sE\cong\Ext_R^1(R/Rs,E)=0$ for any $s\in S$. Hence, $m_s$ is an epimorphism. Consequently, $E$ is an $R_S$-module.
\end{proof}

Let $S$ be a  multiplicative subset of a ring $R$. We say an ideal $I$ of $R$ is an $S$-ideal if $I\cap S\not=\emptyset.$  
\begin{definition}
	Let $R$ be a ring and $S$ a  multiplicative subset of $R$. 
	An $R$-module $M$ is said to be  $S$-injective if $\Ext_R^1(R/I,M)=0$ for any $S$-ideal $I$ of $R$.
\end{definition}

It is well-known that the quotient field $Q$ of a domain $D$ is both a flat module and an injective module over $D$. However, the localization $R_S$ is only a flat module  over a ring $R$. It is  not injective in general, but it is easy to verify that  $R_S$ is an $S$-injective module over $R$. 

\begin{theorem}\label{gv-inj-c}
	Let $R$ be a ring, $S$ a regular multiplicative subset of $R$ and $E$ an $R$-module. Then the following statements are equivalent:
	\begin{enumerate}
		\item $E$ is  $S$-injective;
		\item $\Ext_R^1(T,E)=0$ for any $S$-torsion module $T$;
		\item $\Ext_R^n(R/I,M)=0$ for any $S$-ideal $I$ of $R$ and any $n\geq 1$;
		\item $\Ext_R^n(T,E)=0$ for any $S$-torsion module $T$ and any $n\geq 1$.
	\end{enumerate}
\end{theorem}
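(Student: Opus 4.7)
The plan is to prove the cycle $(1)\Rightarrow(2)\Rightarrow(4)\Rightarrow(3)\Rightarrow(1)$. The implications $(4)\Rightarrow(3)$, $(4)\Rightarrow(2)$, $(3)\Rightarrow(1)$ and $(2)\Rightarrow(1)$ are essentially immediate once one notes that every cyclic module $R/I$ with $I$ an $S$-ideal is $S$-torsion, since any $s\in I\cap S$ annihilates it. So the substantive content is the two implications $(1)\Rightarrow(2)$ and $(2)\Rightarrow(4)$.

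For $(1)\Rightarrow(2)$ I would run a Baer-type argument via Zorn's lemma. Given a short exact sequence $0\to E\to M\xrightarrow{\pi} T\to 0$ with $T$ an $S$-torsion module, let $(T_0,s_0)$ be a maximal pair in which $T_0\subseteq T$ is a submodule and $s_0\colon T_0\to M$ is an $R$-linear section of $\pi$. If $T_0\neq T$, choose $t\in T\setminus T_0$ and set $J=\{r\in R : rt\in T_0\}$; since $t$ is $S$-torsion, $J\cap S\neq\emptyset$, so $J$ is an $S$-ideal. Pick any $m_0\in\pi^{-1}(t)$ and define $\phi\colon J\to E$ by $\phi(r)=s_0(rt)-rm_0$; this is $R$-linear and lands in $E=\ker\pi$. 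Since $\Ext_R^1(R/J,E)=0$ by (1), $\phi$ extends to some $\widetilde\phi\colon R\to E$, and then $m:=m_0+\widetilde\phi(1)$ satisfies $\pi(m)=t$ and $rm=s_0(rt)$ for every $r\in J$. This lets us extend $s_0$ to an honest section on $T_0+Rt$, contradicting maximality; hence $T_0=T$ and the sequence splits, showing $\Ext_R^1(T,E)=0$.

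For $(2)\Rightarrow(4)$ I would induct on $n$, the base $n=1$ being $(2)$. Given $S$-torsion $T$, pick a generating family $\{t_\lambda\}_{\lambda\in\Lambda}$ together with elements $s_\lambda\in S$ annihilating $t_\lambda$, and consider the natural surjection $p\colon F:=\bigoplus_{\lambda}R/Rs_\lambda\to T$ with kernel $K$. Each element of $F$ has finite support and is annihilated by the product of the corresponding $s_\lambda$'s, so $F$ is $S$-torsion, and therefore so is the submodule $K$. The long exact sequence of $\Ext$ produces
\begin{equation*}
\Ext_R^n(K,E)\longrightarrow \Ext_R^{n+1}(T,E)\longrightarrow \Ext_R^{n+1}(F,E).
\end{equation*}
The left term vanishes by the inductive hypothesis. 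Since each $s_\lambda$ is regular, $R/Rs_\lambda$ has the free resolution $0\to R\xrightarrow{\,s_\lambda\,} R\to R/Rs_\lambda\to 0$, so $\Ext_R^{n+1}(R/Rs_\lambda,E)=0$ for all $n\geq 1$; as $\Ext$ converts direct sums in its first argument into products, the right term vanishes as well. Thus $\Ext_R^{n+1}(T,E)=0$, completing the induction.

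The main obstacle is $(1)\Rightarrow(2)$: direct limits do not commute with $\Ext_R^1$, so one cannot reduce from a general $S$-torsion module to a cyclic $R/I$ by writing $T$ as the directed union of its finitely generated $S$-torsion submodules. The Baer/Zorn approach above bypasses this difficulty by operating directly on the extension and patching sections one cyclic step at a time, using $(1)$ only for the cyclic obstruction $\Ext_R^1(R/J,E)$.
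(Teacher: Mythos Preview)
Your proof is correct. For $(1)\Rightarrow(2)$ you and the paper do the same thing: a Baer--Zorn extension argument (the paper simply cites the analogue of Baer's criterion). The route to the higher $\Ext$ vanishing differs. The paper proves $(1)\Rightarrow(3)$ by change of rings: for an $S$-ideal $I$ pick $a\in I\cap S$; since $a$ is regular, $\pd_R R/Ra\le 1$, and a Cartan--Eilenberg isomorphism gives $\Ext_R^n(R/I,E)\cong\Ext_{R/Ra}^n(R/I,\Hom_R(R/Ra,E))$, which vanishes because $\Hom_R(R/Ra,E)$ is injective over $R/Ra$ by the ordinary Baer criterion. Then $(3)\Rightarrow(4)$ is obtained by a cosyzygy shift together with $(1)\Rightarrow(2)$. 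Your $(2)\Rightarrow(4)$ is more elementary: you resolve $T$ one step by $\bigoplus_\lambda R/Rs_\lambda$, use $\pd_R R/Rs_\lambda\le 1$, and induct via the long exact sequence, avoiding the change-of-rings machinery entirely. One small thing the paper's decomposition buys is that it isolates exactly where regularity of $S$ enters: as the paper remarks, $(1)\Leftrightarrow(2)$ and $(3)\Leftrightarrow(4)$ hold for arbitrary $S$, and only the passage $(1)\Rightarrow(3)$ needs regularity; in your argument the regularity hypothesis is used inside the step $(2)\Rightarrow(4)$.
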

\begin{proof}
	$(4)\Rightarrow (2)\Rightarrow (1)$ and 	$(4)\Rightarrow (3)\Rightarrow (1)$: Trivial.
	
	$(1)\Rightarrow (2)$: Similar with the proof of Baer's criterion for injective modules ( also see \cite[Proposition 2.3]{WL11}), and so we omit it.
	
	$(1)\Rightarrow (3):$ Let $E$ be an $S$-injective $R$-module and $I$ an $S$-ideal of $R$. Let $a\in I\cap S$. Then $a$ is a non-zero-divisor, and so $\pd_RR/Ra\leq 1$. Hence, $\Ext^n_R(R/Ra,E)=0$ for any $n\geq 1$. It follows by \cite[Proposition 4.1.4]{CE56} that $$ \Ext^1_{R/Ra}(R/I,\Hom_R(R/Ra,E))\cong \Ext^1_{R}(R/I,E)=0.$$ So $\Hom_R(R/Ra,E)$ is an injective
	$R/Ra$-module by Baer's criterion. It follows by \cite[Proposition 4.1.4]{CE56} again that, for any $n\geq 1$, we have $$\Ext^n_{R}(R/I,E)\cong \Ext^n_{R/Ra}(R/I,\Hom_R(R/Ra,E)) =0.$$
	
	$(3)\Rightarrow (4):$	It follows by $\Ext_R^n(R/I,E)\cong\Ext_R^{1}(R/I,\Omega_{n-1}(E))$ and 	$(1)\Rightarrow (2).$
\end{proof}
\begin{remark}
$(1)\Leftrightarrow (2)$ and  $(3)\Leftrightarrow (4)$ don't require $S$ to be regular. But $(2)\Rightarrow (3)$ needs  $S$ to be regular.

\end{remark}

Let $R$ be a ring, $S$ be a  multiplicative subset of $R$ and $M$ be an $R$-module. It follows by \cite[Theorem 3.4, Theorem 3.6]{WL11} that every $R$-module $M$ has an $S$-injective envelope:
$$\E_S(M):=\{x\in \E(M)\mid sx\in M\ \mbox{for some}\ s\in S\},$$ where $\E(M)$ is the injective envelope of $M$.  Moreover, $\E_S(M)/M$ is an $S$-torsion module.

Let $R$ be a ring and $S$ a  multiplicative subset of $R$. Let $M$ be an $R$-module. Denote by $$h_S(M)=\sum\limits_{f\in\Hom_R(R_S,M)}\Im(f).$$ 

\begin{definition}\cite{BP19}
	Let $R$ be a ring and $S$ a  multiplicative subset of $R$. 
	An $R$-module $M$ is said to be 
	\begin{enumerate}
		\item $S$-$h$-divisible if $h_S(M)=M$;
		\item $S$-$h$-reduced if $h_S(M)=0$. 
	\end{enumerate}
\end{definition}

It is easy to verify that  an $R$-module $M$  is $S$-$h$-divisible if and only if there is an epimorphism $R_S^{(\kappa)}\twoheadrightarrow M$; and $M$  is $S$-$h$-reduced if and only if $\Hom_R(R_S,M)=0.$ So $S$-$h$-divisible modules are closed under quotients, while $S$-$h$-reduced modules are closed under submodules. Trivially, if the multiplicative subset  $S$ is regular, then every $S$-injective module is $S$-divisible. Moreover, we have the following result.

\begin{proposition}\label{rinj-hd}
Let $R$ be a ring and $S$ a regular multiplicative subset of $R$. Then	every $S$-injective $R$-module is $S$-$h$-divisible.
\end{proposition}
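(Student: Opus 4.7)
The plan is to reduce the statement to an $\Ext$-vanishing that is already available from Theorem \ref{gv-inj-c}, and then read off $S$-$h$-divisibility from a long exact sequence.

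First I would note that, because $S$ is regular, the canonical map $R\hookrightarrow R_S$ is injective, so there is a short exact sequence
\begin{equation*}
0\longrightarrow R\longrightarrow R_S\longrightarrow R_S/R\longrightarrow 0.
\end{equation*}
Next I would verify that the quotient $R_S/R$ is an $S$-torsion $R$-module: any element has the form $\tfrac{r}{s}+R$ with $s\in S$, and multiplication by $s$ sends it to $r+R=0$. This is the only mildly delicate bookkeeping step, and it is immediate.

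With that in hand, the key input is Theorem \ref{gv-inj-c}, which gives $\Ext_R^1(T,E)=0$ for every $S$-torsion module $T$ whenever $E$ is $S$-injective. Applying this to $T=R_S/R$ yields $\Ext_R^1(R_S/R,E)=0$. Now I would apply $\Hom_R(-,E)$ to the displayed short exact sequence and extract the piece
\begin{equation*}
\Hom_R(R_S,E)\longrightarrow \Hom_R(R,E)\longrightarrow \Ext_R^1(R_S/R,E)=0,
\end{equation*}
so the restriction map $\Hom_R(R_S,E)\to \Hom_R(R,E)\cong E$ is surjective.

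Finally, I would conclude as follows: given any $x\in E$, the homomorphism $R\to E$, $1\mapsto x$, lifts to some $f\in\Hom_R(R_S,E)$ with $f(1)=x$, so $x\in\Im(f)\subseteq h_S(E)$. Hence $h_S(E)=E$, meaning $E$ is $S$-$h$-divisible. There is no real obstacle here beyond identifying the correct short exact sequence and quoting Theorem \ref{gv-inj-c}; the regularity hypothesis on $S$ is used exactly to ensure that $R\to R_S$ is a monomorphism so that the quotient $R_S/R$ (and not merely $R_S/\mathrm{Im}(R\to R_S)$) appears and is $S$-torsion.
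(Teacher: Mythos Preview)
Your proof is correct, and it takes a genuinely different and more economical route than the paper's. The paper argues by choosing a free cover $F\twoheadrightarrow E$, passing to the $S$-injective envelope $\E_S(F)$, and then invoking Proposition~\ref{w-gv-inj} to see that $\E_S(F)$ is an $R_S$-module (being $S$-divisible and $S$-torsion-free), so that $E$ is a quotient of a free $R_S$-module via $R_S^{(\kappa)}\twoheadrightarrow \E_S(F)\twoheadrightarrow E$. Your approach instead applies $\Hom_R(-,E)$ to the exact sequence $0\to R\to R_S\to R_S/R\to 0$, notes that $R_S/R$ is $S$-torsion, and uses Theorem~\ref{gv-inj-c} directly to kill $\Ext_R^1(R_S/R,E)$ and force the evaluation map $\Hom_R(R_S,E)\to E$ to be surjective. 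This avoids the machinery of $S$-injective envelopes and Proposition~\ref{w-gv-inj} entirely, and the regularity of $S$ is invoked only once (for injectivity of $R\to R_S$), rather than also being used to ensure that $\E_S(F)$ is $S$-torsion-free. The paper's argument does give something a bit more concrete—an explicit $R_S$-module surjecting onto $E$—but for the bare statement of the proposition your long exact sequence argument is the cleaner path.
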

\begin{proof} Let $E$ be an $S$-injective $R$-module. Then there is an epimorphism $\pi:F\twoheadrightarrow E$ with $F$ a free $R$-module. Considering the embedding map $i:F\hookrightarrow\E_S(F)$ where $\E_S(F)$ is the $S$-injective envelope of $F$, we have an epimorphic lift $\delta: \E_S(F)\twoheadrightarrow E$ of $\pi.$  We claim that $\E_S(F)$ is an $R_S$-module. We only need to show  $\E_S(F)$ is both $S$-divisible and $S$-torsion-free as an $R$-module by Proposition \ref{w-gv-inj}. Note that $\E_S(F)$ is $S$-injective, thus is $S$-divisible as $S$ is regular. Note that the $S$-injective envelope of $S$-torsion-free module is $S$-torsion-free. Since $S$ is regular, every free $R$-module $F$ is $S$-torsion-free, and hence $\E_S(F)$ is also $S$-torsion-free. It follows by  Proposition \ref{w-gv-inj} that  $\E_S(F)$ is an $R_S$-module. So there is an epimorphism $\sigma:R_S^{(\kappa)}\twoheadrightarrow \E_S(F)$. The composition $\delta\circ\sigma:R_S^{(\kappa)} \twoheadrightarrow E$ implies that $E$ is $S$-$h$-divisible.
\end{proof}

\begin{definition}\cite{LS19}
	Let $R$ be a ring and $S$ a  multiplicative subset of $R$. 
	An $R$-module $M$ is said to be 
	\begin{enumerate}
		\item $S$-weakly cotorsion, denoted by $S\mbox{-}\mathcal{WC}$, if $\Ext_R^1(R_S,M)=0$;
		\item $S$-strongly flat, denoted by $S\mbox{-}\mathcal{SF}$, if $\Ext_R^1(M,N)=0$ for any $S$-weakly cotorsion module $N$. 
	\end{enumerate}
\end{definition}

It follows by \cite[Lemma 1.2]{BP19} that an $R$-module $F$ is $S$-strongly flat if and only if it is a direct summand of an
$R$-module $G$ for which there exists an exact sequence of $R$-modules
$$0\rightarrow U\rightarrow G\rightarrow V \rightarrow 0$$
where $U$ is a free $R$-module and $V$ is a free $R_S$-module. Hence $$S\mbox{-}\mathcal{WC}=\Ker\Ext_R^1(R_S\mbox{-}\Mod,-).$$ It follows by \cite[Theorem 5.27,Theorem 6.11]{gt} that every $R$-module has an $S\mbox{-}\mathcal{WC}$-envelope and an special $S\mbox{-}\mathcal{SF}$-precover.

Recall that a pair $(\mathcal{A},\mathcal{B})$ of $R$-modules  is said to be a cotorsion pair if $\mathcal{A}=\Ker\Ext_R^1(-,\mathcal{B})$ and $\mathcal{B}=\Ker\Ext_R^1(\mathcal{A},-)$. Moreover, a cotorsion pair $(\mathcal{A},\mathcal{B})$ is called completed if for each $R$-module $M$ there is an exact sequence $0\rightarrow M\rightarrow B\rightarrow A\rightarrow0$ (or equivalently $0\rightarrow B\rightarrow A\rightarrow M\rightarrow0$) with $A\in \mathcal{A}$ and $B\in \mathcal{B}.$ A cotorsion pair $(\mathcal{A},\mathcal{B})$ is perfect if every $R$-module has an $\mathcal{A}$-cover and a $\mathcal{B}$-envelope. A cotorsion pair $(\mathcal{A},\mathcal{B})$ is hereditary if $\mathcal{A}$ is resolving (or equivalently $\mathcal{B}$ is coresolving).

It follows by \cite[Theorem 6.11]{gt} that  $(S\mbox{-}\mathcal{SF},S\mbox{-}\mathcal{WC})$ is a complete cotorsion pair. Recall from \cite[Definition 7.6]{BP19}  that a ring $R$ is called to be $S$-almost perfect if $R_S$ is a perfect ring and $R/sR$ is a perfect ring for every $s\in S.$ It was proved in  \cite[Theorem 7.9]{BP19} that a ring $R$ is  $S$-almost perfect if and only if $(S\mbox{-}\mathcal{SF},S\mbox{-}\mathcal{WC})$ is a perfect cotorsion pair, i.e., $S\mbox{-}\mathcal{SF}$ is covering and $S\mbox{-}\mathcal{WC}$ is enveloping. So it is natural to ask: 
\begin{center}
When the cotorsion pair $(S\mbox{-}\mathcal{SF},S\mbox{-}\mathcal{WC})$ is hereditary?
\end{center}
 In the domain case, Matlis \cite{M64} showed that, in a modern language, the classes of strongly flat modules and weak cotorsion modules consitute a hereditary cotorsion pair if and only if the basic domain $D$ is a Matlis domain, i.e., $\pd_DQ\leq 1$ where $Q$ is the quotient field of $D$.  One of the main motivation of this note is to extend this result to a more general situation. 
 
 The authors in \cite{aht05} gave several characterizations of rings satisfying $\pd_RR_S\leq 1$. For convenience, we call them $S$-Matlis rings:
 
\begin{definition} Let $R$ be a ring and $S$ a  multiplicative subset of $R$. Then $R$ is called an $S$-Matlis ring if $\pd_RR_S\leq 1$. 
\end{definition}

Several characterizations of $S$-Matlis rings are given in \cite{aht05}: 
\begin{theorem}\label{SM-ref}\cite[Theorem 1.1]{aht05} Let $R$ be a ring and $S$ a regular multiplicative subset of $R$. Then the following statements are equivalent:
	\begin{enumerate}
		\item $R$ is an $S$-Matlis ring;
		\item $R_S\oplus R_S/R$ is a tilting module;
		\item  the class of $S$-$h$-divisible modules is equal to $\Ker\Ext_R^1(R_S/R,-)$;
		\item the class of $S$-$h$-divisible modules is equal to that of $S$-divisible modules;
		\item $R_S/R$ is a direct sum of countably presented $R$-submodules;
		\item $R$ has an $S$-divisible envelope.
	\end{enumerate}
\end{theorem}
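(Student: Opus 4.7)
The plan is to prove the equivalences using two preparatory tools derived from the defining sequence $0\to R\to R_S\to R_S/R\to 0$, and then chain the implications. Tool 1 (Bridge Lemma) applies $\Hom_R(-,M)$ and uses $\Ext_R^n(R,-)=0$ for $n\geq 1$ to obtain
\[
\Hom_R(R_S,M)\xrightarrow{\epsilon_M} M\to \Ext_R^1(R_S/R,M)\to \Ext_R^1(R_S,M)\to 0,
\]
together with natural isomorphisms $\Ext_R^n(R_S/R,M)\cong \Ext_R^n(R_S,M)$ for $n\geq 2$. A direct check gives $\Im(\epsilon_M)=h_S(M)$: for $f\in\Hom_R(R_S,M)$ and $x\in R_S$, the element $f(x)$ equals $g(1)$ via the auxiliary homomorphism $g(y):=f(xy)$. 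Consequently $M$ is $S$-$h$-divisible iff $M\to\Ext_R^1(R_S/R,M)$ vanishes; in particular $\Ker\Ext_R^1(R_S/R,-)\subseteq\{S\text{-}h\text{-divisible modules}\}$ always, and $\pd_RR_S\leq 1\Leftrightarrow \pd_RR_S/R\leq 1$. Tool 2 states $\Ext_R^1(R_S,N)=0$ for every $R_S$-module $N$: an $R$-projective resolution of $R_S$ tensors (by flatness of $R_S$) to an $R_S$-projective resolution of $R_S\otimes_RR_S=R_S$, and the $\Hom$-$\otimes$ adjunction identifies $\Ext_R^1(R_S,N)$ with $\Ext_{R_S}^1(R_S,N)=0$.

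For $(3)\Leftrightarrow(1)$: the direction $(3)\Rightarrow(1)$ embeds arbitrary $M$ in an injective $E$; by Proposition~\ref{rinj-hd} $E$ is $S$-$h$-divisible, and so is the quotient $E/M$; (3) then gives $\Ext_R^1(R_S/R,E/M)=0$. Since $\Ext_R^1(R_S/R,E)=0$ by injectivity, the long exact sequence forces $\Ext_R^2(R_S/R,M)=0$, proving $\pd_RR_S/R\leq 1$. For $(1)\Rightarrow(3)$, let $M$ be $S$-$h$-divisible with epimorphism $\pi:R_S^{(\kappa)}\twoheadrightarrow M$ and kernel $K$; $\pd_RR_S/R\leq 1$ gives $\Ext_R^2(R_S/R,K)=0$, so $\Ext_R^1(R_S/R,M)$ is a quotient of $\Ext_R^1(R_S/R,R_S^{(\kappa)})$. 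The Bridge Lemma applied to the $S$-$h$-divisible module $R_S^{(\kappa)}$ yields an injection $\Ext_R^1(R_S/R,R_S^{(\kappa)})\hookrightarrow \Ext_R^1(R_S,R_S^{(\kappa)})$, and Tool 2 makes the target vanish. The equivalence $(1)\Leftrightarrow(4)$ is treated analogously: $(1)\Rightarrow(4)$ combines $(1)\Rightarrow(5)$ below with the explicit vanishing $\Ext_R^1(C_i,M)=0$ on each countably presented $S$-torsion summand $C_i\subseteq R_S/R$ when $M$ is $S$-divisible (via an inverse-limit argument applied to the cyclic modules $R/s_nR$ generating $C_i$), yielding $\Ext_R^1(R_S/R,M)=\prod_i\Ext_R^1(C_i,M)=0$, so $M$ is $S$-$h$-divisible by (3); the converse $(4)\Rightarrow(1)$ routes through $(4)\Rightarrow(2)$, verifying the 1-tilting axioms once both projective-dimension components of $T=R_S\oplus R_S/R$ are controlled.

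For $(1)\Leftrightarrow(2)$, verify the 1-tilting axioms of $T:=R_S\oplus R_S/R$: $\pd_RT\leq 1$ is (1); the tilting resolution is $0\to R\to R_S\to R_S/R\to 0$; and $\Ext_R^1(T,T^{(\kappa)})=0$ splits into four vanishings, two handled by Tool 2 (for $\Ext_R^1(R_S,-)$ against $R_S$-modules) and two by $(1)\Rightarrow(3)$ (for $\Ext_R^1(R_S/R,-)$ against the $S$-$h$-divisible targets $R_S^{(\kappa)}$ and $(R_S/R)^{(\kappa)}$). For $(5)\Rightarrow(1)$, each countably presented summand $C_i$ of $R_S/R$ inherits flat dimension $\leq 1$ from the flat resolution $0\to R\to R_S\to R_S/R\to 0$, and the Gruson--Raynaud theorem yields $\pd_RC_i\leq 1$, whence $\pd_RR_S/R=\sup_i\pd_RC_i\leq 1$. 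For $(1)\Rightarrow(5)$, apply a Kaplansky-style transfinite filtration of the $\pd\leq 1$ module $R_S/R$ by countably presented submodules of $\pd\leq 1$, then Hill's lemma to convert the filtration into a direct-sum decomposition. For $(1)\Leftrightarrow(6)$, once (2) is known, the class of $S$-divisible modules coincides with $\mathrm{Gen}(T)$; $R$ admits an $S$-divisible envelope iff the tilting cotorsion pair admits envelopes at $R$, which by Bazzoni--Herbera's criterion is equivalent to $T$ being 1-tilting, hence to (1).

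The principal obstacle is the Kaplansky--Hill-type argument for $(1)\Rightarrow(5)$: constructing the countably presented filtration requires a careful interplay between the projective-dimension bound and the $S$-torsion structure of $R_S/R$, and extracting an actual direct-sum decomposition from the filtration relies on Hill's lemma in its general form rather than just a filtered union.
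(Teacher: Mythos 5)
First, note that the paper does not prove this theorem at all: it is imported verbatim as \cite[Theorem 1.1]{aht05}, so there is no internal proof to compare against. Judged on its own, your outline gets the soft implications right --- the Bridge Lemma (identifying $\Im(\epsilon_M)$ with $h_S(M)$ and deducing $\Ker\Ext_R^1(R_S/R,-)\subseteq\{S\mbox{-}h\mbox{-divisible}\}$ and $\pd_RR_S\le 1\Leftrightarrow\pd_RR_S/R\le1$), the vanishing $\Ext_R^1(R_S,N)=0$ for $R_S$-modules $N$, and the equivalence $(1)\Leftrightarrow(3)$ are all correct and standard. But the implications that carry the real content of the theorem have genuine gaps. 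For $(5)\Rightarrow(1)$: a countably presented module of flat dimension $\le 1$ has projective dimension $\le 2$, not $\le 1$; the Jensen/Raynaud--Gruson bound is $\pd\le\mathrm{fd}+1$ for countably presented modules, so your argument is off by one and only yields $\pd_RR_S\le2$. The true statement needed here is that a countably presented \emph{$S$-torsion} module of weak dimension $\le1$ has projective dimension $\le1$, which requires the specific presentation of $C_i$ as a countable union of finitely generated submodules of the cyclic modules $\tfrac{1}{s}R/R\cong R/sR$, not the general dimension-shift. For $(1)\Rightarrow(5)$: Hill's lemma does not ``convert a filtration into a direct-sum decomposition''; a module filtered by countably presented submodules is in general very far from being a direct sum of countably presented submodules. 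The decomposition of $R_S/R$ is a delicate transfinite construction exploiting that $R_S/R$ is the directed union of the $\tfrac1sR/R$, and this is precisely the step you cannot outsource to a general-purpose filtration lemma.

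The implication $(4)\Rightarrow(1)$ as you sketch it is circular: you route it through $(4)\Rightarrow(2)$ and say the tilting axioms are verified ``once both projective-dimension components of $T=R_S\oplus R_S/R$ are controlled,'' but the first tilting axiom \emph{is} $\pd_RT\le1$, i.e.\ statement (1). Nothing in your outline explains how the coincidence of $S$-divisible and $S$-$h$-divisible modules forces the projective dimension bound; this is one of the hardest directions in \cite{aht05} and needs an independent argument (e.g.\ realizing the $S$-divisible class as the tilting class of a module of projective dimension one and comparing it with $\mathrm{Gen}(R_S)$). Similarly, in $(1)\Rightarrow(4)$ the key vanishing $\Ext_R^1(C_i,M)=0$ for $S$-divisible $M$ is asserted ``via an inverse-limit argument'' with no verification of the Mittag--Leffler condition killing $\varprojlim^1$, and $(1)\Leftrightarrow(6)$ leans on all of the above plus an unexplained appeal to the Bazzoni--Herbera criterion. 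In short, the skeleton is the right one, but every load-bearing step of the original theorem is either missing, circular, or supported by a tool that does not give the claimed conclusion.
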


\section{Main result}

Now, we are ready to prove the main result of this note.

\begin{theorem}\label{main} Let $R$ be a ring and $S$ a regular multiplicative subset of $R$. Then the following statements are equivalent:
	\begin{enumerate}
		\item $R$ is an $S$-Matlis ring;
		\item the class of $S$-strongly flat $R$-modules is resolving;
		\item the class of $S$-weakly cotorsion $R$-module is coresolving;
		\item the  cotorsion pair $(S\mbox{-}\mathcal{SF},S\mbox{-}\mathcal{WC})$ is hereditary;
		%\item $\Ext_R^1(Q,E)$ is $h$-reduced for any reg-injective $R$-module $E$;
		\item $\Ext_R^1(R_S/R,M)$ is $S$-$h$-reduced for any $($$S$-divisible$)$  $R$-module $M$;
	%\item every $S$-torsion submodule of $S$-divisible  $R$-module is a direct summand;
		%\item every $S$-divisible $R$-module has an $S$-strongly flat cover;
		\item every $S$-strongly flat $R$-module has projective dimension at most $1$;
		\item every $S$-$h$-divisible $R$-module is $S$-weakly cotorsion.
		%\item the class of $S$-$h$-divisible $R$-modules is closed under extensions.
	\end{enumerate}
\end{theorem}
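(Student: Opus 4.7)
My plan is to route the seven conditions through
$$(1) \Leftrightarrow (6), \quad (6) \Rightarrow (4) \Leftrightarrow (3) \Leftrightarrow (2), \quad (4) \Rightarrow (5) \Rightarrow (7) \Rightarrow (1).$$
The link $(1) \Leftrightarrow (6)$ is immediate since $R_S$ is itself $S$-strongly flat (take $U = 0$, $V = R_S$ in the defining sequence), so $(6)$ applied to $R_S$ forces $\pd_R R_S \le 1$; conversely, any $F \in S\mbox{-}\mathcal{SF}$ is a summand of $G$ in $0 \to U \to G \to V \to 0$ with $U$ free and $V$ free over $R_S$, giving $\pd_R F \le \max\{0, \pd_R R_S\} \le 1$ under $(1)$. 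Once $(6)$ holds, every $F \in S\mbox{-}\mathcal{SF}$ satisfies $\Ext^n_R(F, N) = 0$ for all $n \ge 1$ and $N \in S\mbox{-}\mathcal{WC}$ (for $n = 1$ by the definition of $S\mbox{-}\mathcal{SF}$ and for $n \ge 2$ by the pd bound), so the cotorsion pair is hereditary and $(4)$ follows; the chain $(4) \Leftrightarrow (3) \Leftrightarrow (2)$ is the standard characterization of hereditary complete cotorsion pairs.

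For $(7) \Rightarrow (1)$ I dimension-shift. Embed any $M$ in an injective $I$ to form $0 \to M \to I \to I/M \to 0$; by Proposition \ref{rinj-hd}, $I$ is $S$-$h$-divisible, and so is the quotient $I/M$. Condition $(7)$ puts $I/M$ in $S\mbox{-}\mathcal{WC}$, whence $\Ext^2_R(R_S, M) \cong \Ext^1_R(R_S, I/M) = 0$ and thus $\pd_R R_S \le 1$. For $(5) \Rightarrow (7)$, let $D$ be $S$-$h$-divisible. Applying $\Hom_R(-, D)$ to $0 \to R \to R_S \to R_S/R \to 0$ yields
\begin{equation*}
0 \to D/h_S(D) \to \Ext^1_R(R_S/R, D) \to \Ext^1_R(R_S, D) \to 0,
\end{equation*}
and $D = h_S(D)$ collapses this to an isomorphism $\Ext^1_R(R_S/R, D) \cong \Ext^1_R(R_S, D)$. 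By $(5)$ the left-hand side is $S$-$h$-reduced, hence so is the right; but $\Ext^1_R(R_S, D)$ carries a natural $R_S$-module structure and any $R_S$-module $N$ satisfies $\Hom_R(R_S, N) = N$, so an $S$-$h$-reduced $R_S$-module must vanish. Thus $D \in S\mbox{-}\mathcal{WC}$.

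The main obstacle is $(4) \Rightarrow (5)$. The crucial input is that $R_S$ is flat over $R$ and $R_S \otimes_R R_S/R = R_S/R_S = 0$, whence $R_S \otimes^{\mathbf{L}}_R R_S/R = 0$ and the derived hom-tensor adjunction gives $\mathbf{R}\Hom_R(R_S, \mathbf{R}\Hom_R(R_S/R, M)) = 0$ for every $M$. This produces a first-quadrant Grothendieck spectral sequence
\begin{equation*}
E_2^{p,q} = \Ext^p_R\bigl(R_S,\, \Ext^q_R(R_S/R, M)\bigr) \;\Longrightarrow\; 0,
\end{equation*}
whose validity I would verify by showing that $\Hom_R(R_S/R, I)$ is $\Hom_R(R_S, -)$-acyclic for every injective $I$: if $P_\bullet \to R_S$ is a projective resolution, flatness of $R_S$ makes $P_\bullet \otimes_R R_S/R$ acyclic, and then $\Hom_R(-, I)$ preserves exactness. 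Inspecting the sequence, $E_2^{1, 0} = \Ext^1_R(R_S, \Hom_R(R_S/R, M))$ meets no nontrivial differentials and hence equals $E_\infty^{1, 0} = 0$, so $\Hom_R(R_S/R, M) \in S\mbox{-}\mathcal{WC}$ unconditionally. Under $(4)$, the hereditary property upgrades this to $E_2^{2, 0} = \Ext^2_R(R_S, \Hom_R(R_S/R, M)) = 0$, and the only differential touching $E_2^{0, 1} = \Hom_R(R_S, \Ext^1_R(R_S/R, M))$ is $d_2$ landing in this now-zero $E_2^{2, 0}$; comparison with the vanishing abutment forces $E_2^{0, 1} = 0$, which is precisely $(5)$.
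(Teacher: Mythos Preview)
Your argument is correct and takes a genuinely different route from the paper. The paper runs $(1)\Rightarrow(3)\Rightarrow(5)\Rightarrow(1)$ together with $(1)\Leftrightarrow(6)$ and $(1)\Leftrightarrow(7)$, and its two hard steps are done by hand: $(3)\Rightarrow(5)$ uses the $S$-injective envelope $\E_S(M)$ and an explicit analysis of $\Hom_R(K,\E_S(M))$ and its quotients, while $(5)\Rightarrow(1)$ is an element-level splitting argument showing $\Ext^1_R(R_S,T)=0$ for $S$-torsion $S$-$h$-divisible $T$, followed by another appeal to $\E_S(N)$; the link $(1)\Leftrightarrow(7)$ is deduced from the cited Theorem~\ref{SM-ref}. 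You instead close the cycle via $(4)\Rightarrow(5)\Rightarrow(7)\Rightarrow(1)$: your $(4)\Rightarrow(5)$ replaces the envelope calculation by the Grothendieck spectral sequence for $\mathbf{R}\Hom_R(R_S,\mathbf{R}\Hom_R(K,M))=0$, and your $(5)\Rightarrow(7)\Rightarrow(1)$ exploits the $R_S$-module structure on $\Ext^1_R(R_S,D)$ plus a one-line dimension shift, thereby avoiding both the hands-on splitting and the external reference~\cite{aht05}. The paper's approach is more elementary and keeps everything at the level of the module-theoretic machinery (in particular the $S$-injective envelopes) built up in Section~2; your approach is shorter and more conceptual once one is willing to invoke derived adjunction and a first-quadrant spectral sequence, and it makes the equivalence with $(7)$ internal to the proof rather than a corollary of Theorem~\ref{SM-ref}.
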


\begin{proof} 
	
	We always denote by $K:=R_S/R$. Then $K$ is $S$-torsion and $S$-$h$-divisible.
	
 $(1)\Rightarrow(3):$ Trivially, every injective module is $S$-weakly cotorsion and the class of  $S$-weakly cotorsion modules is closed under direct products. Now, let $0\rightarrow M\rightarrow N\rightarrow L\rightarrow0$ be a short exact sequence with $M$ and $N$   $S$-weakly cotorsion. Then $$0=\Ext_R^1(R_S,N)\rightarrow\Ext_R^1(R_S,L)\rightarrow\Ext_R^2(R_S,M)=0$$ is exact. So $\Ext_R^1(R_S,L)=0$, that is, $L$ is an $S$-weakly cotorsion module.
	
$(2)\Leftrightarrow(3)\Leftrightarrow(4):$	It follows by \cite[Lemma 5.24]{gt}.

$(3)\Rightarrow (5):$ Let $M$ be an $R$-module and $\E_S(M)$ its $S$-injective envelope. Then $\E_S(M)/M$ is $S$-torsion as an $R$-module. Moreover, we have a long exact sequence $0\rightarrow \Hom_R(K,M)\rightarrow\Hom_R(K,\E_S(M))\rightarrow\Hom_R(K,\E_S(M)/M)\rightarrow\Ext_R^1(K,M)\rightarrow\Ext_R^1(K,\E_S(M))=0.$ Setting
\begin{center}
$A=\Hom_R(K,\E_S(M))/\Hom_R(K,M)$ and $B=\Hom_R(K,\E_S(M)/M)$, 
\end{center}
 we have an exact sequence $$0\rightarrow A\rightarrow B\rightarrow \Ext_R^1(K,M)\rightarrow 0.$$ Since $$\Hom_R(R_S,B)=\Hom_R(R_S,\Hom_R(K,\E_S(M)/M))\cong \Hom_R(R_S\otimes_RK,\E_S(M)/M)=0,$$ $B$ is $S$-$h$-reduced.  It follows by \cite[Lemma 2.3]{FS09} that  $$\Ext_R^1(R_S,\Hom_R(K,\E_S(M)))\cong \Ext_R^1(R_S\otimes_RK,\E_S(M))$$
as $\Ext_R^1(K,\E_S(M))=0=\Tor_1^R(R_S,K)$(see Theorem \ref{gv-inj-c}).
Since $R_S\otimes_RK=0$, we have $\Ext_R^1(R_S,\Hom_R(K,\E_S(M)))=0,$ that is, $\Hom_R(K,\E_S(M))$ is $S$-weakly cotorsion. Since $B=\Hom_R(K,\E_S(M)/M)$ is $S$-$h$-reduced, so is its submodule $A$. Considering the exact sequence $$0\rightarrow\Hom_R(K,M)\rightarrow\Hom_R(K,\E_S(M))\rightarrow A\rightarrow0,$$  we have $\Hom_R(K,M)$ is  $S$-weakly cotorsion. It follows by $(3)$ that $A$ is also  $S$-weakly cotorsion, that is, $\Ext_R^1(R_S,A)=0$. Considering the exact sequence 
$$\Hom_R(R_S,B)\rightarrow\Hom_R(R_S,\Ext_R^1(K,M))\rightarrow\Ext_R^1(R_S,A),$$ we have $\Hom_R(R_S,\Ext_R^1(K,M))=0$, that is, $\Ext_R^1(K,M)$ is $S$-$h$-reduced.
	
$(5)\Rightarrow(1):$ Let $M$ be an $R$-module fitting into an exact sequence $$\Psi:\  0\rightarrow T\rightarrow M\xrightarrow{\pi} R_S\rightarrow0$$ where $T$ is $S$-torsion and $S$-$h$-divisible. Then $M$ is trivially $S$-divisible.
 
\textbf{Claim 1: $M$ is $S$-$h$-divisible.} Indeed, applying $\Hom_R(-,M)$ to the exact sequence $0\rightarrow R\rightarrow R_S\rightarrow K\rightarrow0$, we have an exact sequence $$0\rightarrow M/h(M)\rightarrow\Ext_R^1(K,M)\rightarrow\Ext_R^1(R_S,M)\rightarrow0.$$ By (5), $\Ext_R^1(K,M)$ is $S$-$h$-reduced, so is its submodule $M/h(M)$.  Since $T\subseteq h(M)$, we have an epimorphism $R_S\cong M/T\twoheadrightarrow M/h(M)$.
Since $M/h(M)$ is $S$-$h$-reduced and hence equal to $0$, and hence $M$ is $S$-$h$-divisible.

\textbf{Claim 2: $\Ext_R^1(R_S,T)=0$ for any  $S$-torsion and $S$-$h$-divisible $R$-module $T$.} Indeed, we just need to $\Psi$ splits. Since $M$ is $S$-$h$-divisible, there is an epimorphism $\rho: R_S^{(\kappa)}\twoheadrightarrow M$. Take $m_1\in M$ and $q_1\in R_S^{(\kappa)}$ such that $\pi(m_1)=1$ and $\rho(q_1)=m_1$. We claim that $T\oplus \rho(q_1R_S)=M.$ Let $m\in M-T$. Then $(m+T)s=(m_1+T)r$ for some  $s\in S.$ So there exists $t\in T$ such that $ms-m_1r=t$. Since $T$ is $S$-divisible, $t=t's$ for some $t'\in T$. Hence $(m-t')s=m_1r=\rho(q_1r)$ and $ms=\rho(q_1rs^{-1})s+t's$. Put $d=m-\rho(q_1rs^{-1})-t'$. Then $d\in M$ and $ds=0$. So $d\in T$ and $m\in T+\rho(q_1R_S)$. Next we will show $T\cap \rho(q_1R_S)=0.$ Indeed, let $t=\rho(q_1rs^{-1})\in T\cap \rho(q_1R_S).$ Then $ts=\rho(q_1r)=m_1r\in T$. So $r=\pi(ts)=0$. Hence the claim holds. It follows that $\Ext_R^1(R_S,T)=0$. 

Let $N$ be an arbitrary $R$-module and $\E_S(N)$ be its $S$-injective envelope. Then $\E_S(N)/N$ is an $S$-torsion $R$-module. It follows by Proposition \ref{rinj-hd} that $\E_S(N)/N$ is also an $S$-$h$-divisible $R$-module. By \textbf{Claim 2}, we have $\Ext_R^1(R_S,\E_S(N)/N)=0$. Since  $\E_S(N)$ is $S$-injective,  $\Ext_R^n(K,\E_S(N))=0$  for every $n\geq 1$ by Theorem \ref{gv-inj-c}. Considering the long exact sequence $$0=\Ext_R^{n}(R,\E_S(N))\rightarrow\Ext_R^{n+1}(K,\E_S(N))\rightarrow \Ext_R^{n+1}(R_S,\E_S(N))\rightarrow \Ext_R^{n+1}(R,\E_S(N))=0,$$ we have $\Ext_R^n(R_S,\E_S(N))=0$ for every $n\geq 2$.
Considering the long exact sequence $$0=\Ext_R^1(R_S,\E_S(N)/N)\rightarrow \Ext_R^2(R_S,N)\rightarrow\Ext_R^2(R_S,\E_S(N))=0,$$  we have $\Ext_R^2(R_S,N)=0.$ It follows that $\pd_RR_S\leq 1$, that is, $R$ is a Matlis ring.

%$(6)\Rightarrow (5):$ Let $M$ be a $S$-divisible module. Consider the exact sequence $0\rightarrow M/d(M)\rightarrow \Ext_R^1(K,M)\rightarrow\Ext_R^1(R_S,M)\rightarrow0$

%$(9)\Rightarrow(1):$ See  \cite[Lemma 3.1]{F20}.

 $(6)\Rightarrow(1):$ Trivial.
 
  $(1)\Rightarrow(6):$ It follows by \cite[Lemma 1.2]{BP19} that every $S$-strongly flat module is a direct summand of $M$ that fits into a short exact sequence $0\rightarrow F\rightarrow M\rightarrow G\rightarrow0$ where $F$ is $R$-free and $G$ is $\{R_S\}$-free. Since $\pd_RR_S\leq 1$, so $\pd_RM\leq 1$. Hence every $S$-strongly flat $R$-module has projective dimension at most $1$.
  
%$(1)\Rightarrow (8):$ It follows by Theorem \ref{SM-ref} that the class of  $S$-divisible modules is equal to that of $S$-$h$-divisible modules and that  $S$-divisible modules are closed under extensions.

%$(8)\Rightarrow ():$  

$(1)\Leftrightarrow (7)$ Let $M$ be an $S$-$h$-divisible module. Consider the exact sequence $\Hom_R(R_S,M)\twoheadrightarrow \Hom_R(R,M)\rightarrow \Ext_R^1(R_S/R,M)\rightarrow\Ext_R^1(R_S,M)\rightarrow \Ext_R^1(R,M)=0$. Then  $\Ext_R^1(R_S/R,M)=0$ is equivalent to $\Ext_R^1(R_S,M)=0$. Hence the equivalence follows by Theorem \ref{SM-ref}.
\end{proof}

If $S$ consists of all non-zero-divisors in Theorem \ref{main}, then we can give the following characterizations of Matlis rings:
\begin{corollary} Let $R$ be a ring with $Q$ its total ring of quotients. Then the following statements are equivalent:
\begin{enumerate}
\item $R$ is a Matlis ring;
\item the class of strongly flat $R$-modules is resolving;
\item the class of weakly cotorsion $R$-module is coresolving;
\item the cotorsion pair $(\mathcal{SF},\mathcal{WC})$ is hereditary;
		%\item $\Ext_R^1(Q,E)$ is $h$-reduced for any $S$-injective $R$-module $E$;
\item $\Ext_R^1(Q/R,M)$ is $h$-reduced for any $($divisible$)$  $R$-module $M$;
%\item the class of $h$-divisible $R$-modules is closed under extensions;
%\item every torsion submodule of $S$-divisible  $R$-module is a direct summand;
%\item every $S$-divisible $R$-module has a strongly flat cover;
\item every strongly flat $R$-module has projective dimension at most $1$;
\item every $h$-divisible $R$-module is weakly cotorsion.
	\end{enumerate}
\end{corollary}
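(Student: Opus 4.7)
The overall plan is to build the cycle $(1)\Rightarrow(3)\Rightarrow(5)\Rightarrow(1)$ and then attach the remaining items by direct equivalences. For $(2)\Leftrightarrow(3)\Leftrightarrow(4)$ I would invoke the standard lemma on complete cotorsion pairs (that resolving of $\mathcal{A}$, coresolving of $\mathcal{B}$, and the hereditary property of $(\mathcal{A},\mathcal{B})$ coincide), since $(S\mbox{-}\mathcal{SF},S\mbox{-}\mathcal{WC})$ has already been noted to be complete. The implication $(1)\Rightarrow(3)$ is an immediate consequence of $\Ext_R^2(R_S,-)=0$ together with the obvious fact that injectives lie in $S\mbox{-}\mathcal{WC}$ and that $S\mbox{-}\mathcal{WC}$ is closed under products. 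For $(1)\Leftrightarrow(6)$ I would use the Bazzoni--Positselski structural description: every $S$-strongly flat module is a summand of an extension $0\to F\to M\to G\to 0$ with $F$ free over $R$ and $G$ free over $R_S$, so $\pd_R R_S\leq 1$ forces $\pd_R M\leq 1$, and the converse is trivial since $R_S$ is $S$-strongly flat. For $(1)\Leftrightarrow(7)$ I would apply $\Hom_R(-,M)$ to $0\to R\to R_S\to K\to 0$ (where $K=R_S/R$) to identify the vanishing of $\Ext_R^1(R_S,M)$ with that of $\Ext_R^1(K,M)$ when $M$ is $S$-$h$-divisible, and then use Theorem \ref{SM-ref} to pass to $(1)$.

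The nontrivial direction $(3)\Rightarrow(5)$ would proceed by embedding an arbitrary $M$ into its $S$-injective envelope $\E_S(M)$ and applying $\Hom_R(K,-)$ to the resulting sequence $0\to M\to \E_S(M)\to \E_S(M)/M\to 0$. Since Theorem \ref{gv-inj-c} gives $\Ext_R^1(K,\E_S(M))=0$, I get a short exact sequence $0\to A\to B\to \Ext_R^1(K,M)\to 0$ with $A$ a quotient of $\Hom_R(K,\E_S(M))$ by $\Hom_R(K,M)$ and $B=\Hom_R(K,\E_S(M)/M)$. The identity $\Hom_R(R_S,B)\cong \Hom_R(R_S\otimes_R K,\E_S(M)/M)=0$, based on $R_S\otimes_R K=0$, shows $B$ is $S$-$h$-reduced, hence so is its submodule $A$. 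Next, a Cartan--Eilenberg style adjunction gives $\Ext_R^1(R_S,\Hom_R(K,\E_S(M)))\cong \Ext_R^1(R_S\otimes_R K,\E_S(M))=0$, valid because both $\Tor_1^R(R_S,K)$ and $\Ext_R^1(K,\E_S(M))$ vanish, so $\Hom_R(K,\E_S(M))$ is $S$-weakly cotorsion. Applying hypothesis (3) to the sequence $0\to \Hom_R(K,M)\to \Hom_R(K,\E_S(M))\to A\to 0$ forces $A$ to be $S$-weakly cotorsion (after verifying $\Hom_R(K,M)$ is $S$-weakly cotorsion by the same adjunction trick). A final chase on the $\Hom_R(R_S,-)$ sequence coming from $0\to A\to B\to \Ext_R^1(K,M)\to 0$ then yields $\Hom_R(R_S,\Ext_R^1(K,M))=0$.

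For $(5)\Rightarrow(1)$ the plan is to first show that every short exact sequence $\Psi\colon 0\to T\to M\to R_S\to 0$ with $T$ both $S$-torsion and $S$-$h$-divisible splits. One first verifies $M$ is $S$-$h$-divisible: applying $\Hom_R(-,M)$ to $0\to R\to R_S\to K\to 0$ exhibits $M/h_S(M)$ as a submodule of $\Ext_R^1(K,M)$, which is $S$-$h$-reduced by (5); since $T\subseteq h_S(M)$, $M/h_S(M)$ is a quotient of $R_S\cong M/T$, hence $S$-$h$-divisible as well, so it must vanish. Then a direct elementwise construction using $S$-divisibility of $T$ and lifting an inverse image of $1\in R_S$ through a surjection $R_S^{(\kappa)}\twoheadrightarrow M$ produces a complementary copy of $R_S$ inside $M$, establishing $\Ext_R^1(R_S,T)=0$ for all such $T$. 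To finish, for arbitrary $N$ I embed $N$ into $\E_S(N)$; then $\E_S(N)/N$ is $S$-torsion and, by Proposition \ref{rinj-hd}, $S$-$h$-divisible, so $\Ext_R^1(R_S,\E_S(N)/N)=0$. Combining with the vanishing of $\Ext_R^n(R_S,\E_S(N))$ for $n\geq 2$ (derived via Theorem \ref{gv-inj-c} from the long exact sequence for $0\to R\to R_S\to K\to 0$), the standard long exact sequence gives $\Ext_R^2(R_S,N)=0$, i.e., $\pd_R R_S\leq 1$. The principal obstacle throughout will be correctly bookkeeping which of the four orthogonality properties ($S$-$h$-reduced, $S$-weakly cotorsion, $S$-injective, $S$-$h$-divisible) hold for each intermediate term, and deploying $R_S\otimes_R K=0$ together with the Hom--Ext/Tor adjunction at precisely the right points to collapse the relevant spectral sequence contributions.
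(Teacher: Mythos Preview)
Your outline is essentially the paper's proof of Theorem \ref{main} (of which this corollary is the specialization $S=R^{\times}$), and the logical skeleton $(1)\Rightarrow(3)\Rightarrow(5)\Rightarrow(1)$ together with the side equivalences $(2)\Leftrightarrow(3)\Leftrightarrow(4)$, $(1)\Leftrightarrow(6)$, $(1)\Leftrightarrow(7)$ matches the paper exactly.

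There is one genuine slip in your $(3)\Rightarrow(5)$ argument. You propose to verify that $\Hom_R(K,M)$ is $S$-weakly cotorsion ``by the same adjunction trick'', i.e.\ via $\Ext_R^1(R_S,\Hom_R(K,M))\cong \Ext_R^1(R_S\otimes_R K,M)$. But that isomorphism (the Fuchs--Lee lemma you cite for $\E_S(M)$) needs both $\Tor_1^R(R_S,K)=0$ \emph{and} $\Ext_R^1(K,M)=0$; the second vanishing holds for $\E_S(M)$ by Theorem \ref{gv-inj-c}, but not for an arbitrary $M$. The paper closes this gap differently: from the short exact sequence $0\to \Hom_R(K,M)\to \Hom_R(K,\E_S(M))\to A\to 0$ one gets the exact fragment
\[
\Hom_R(R_S,A)\longrightarrow \Ext_R^1\bigl(R_S,\Hom_R(K,M)\bigr)\longrightarrow \Ext_R^1\bigl(R_S,\Hom_R(K,\E_S(M))\bigr),
\]
and the outer terms vanish because $A$ is $S$-$h$-reduced (being a submodule of $B$) and $\Hom_R(K,\E_S(M))$ is $S$-weakly cotorsion (this is where the adjunction legitimately applies). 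Only then does hypothesis $(3)$ give that $A$ is $S$-weakly cotorsion. With this correction your argument coincides with the paper's.
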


\bigskip

\end{document}